\newtheorem{theorem}{Theorem}
\newtheorem{lemma}{Lemma}
\newtheorem{corollary}{Corollary}
\title{Minimum supports of functions on the Hamming graphs with spectral constraints \thanks{ The reported study was funded by RFBR according to the research project 18-31-00126}}
\author[1,2]{Alexandr Valyuzhenich \thanks{graphkiper@mail.ru}}
\author[1,2]{Konstantin Vorob'ev \thanks{vorobev@math.nsc.ru}}
\affil[1]{Sobolev Institute of Mathematics, Novosibirsk, Russia}
\affil[2]{Novosibirsk State University, Novosibirsk, Russia}
\date{}
\begin{document}

\maketitle

\begin{abstract}
We study functions defined on the vertices of the Hamming graphs $H(n,q)$. The adjacency matrix of $H(n,q)$ has $n+1$ distinct eigenvalues
$n(q-1)-q\cdot i$  with corresponding eigenspaces $U_{i}(n,q)$ for $0\leq i\leq n$. In this work, we consider the problem of finding the minimum possible support (the number of nonzeros) of functions belonging to a direct sum $U_i(n,q)\oplus U_{i+1}(n,q)\oplus\ldots\oplus U_j(n,q)$ for $0\leq i\leq j\leq n$.
For the case $i+j\le n$ and $q\geq 3$ we find the minimum cardinality of the support of such functions and obtain a characterization of functions with the minimum cardinality of the support.
In the case $i+j>n$ and $q\geq 4$ we also find the minimum cardinality of the support of functions, and obtain a characterization of functions with the minimum cardinality of the support for $i=j$, $i>\frac{n}{2}$ and $q\geq 5$.
In particular, we characterize eigenfunctions from the eigenspace $U_{i}(n,q)$ with the minimum cardinality of the support for cases $i\le\frac{n}{2}$, $q\ge3$ and $i>\frac{n}{2}$, $q\ge 5$.

\end{abstract}

\section{Introduction}

Eigenspaces of graphs play an important role in algebraic graph theory (for example, see book \cite{Cvetkovic}). This work is devoted to some extremal properties of eigenspaces of the Hamming graphs. We consider the problem of finding the minimum cardinality of the support of eigenfunctions of the Hamming graph $H(n,q)$.
This problem is directly related to the problem of finding the minimum possible difference of
two combinatorial objects and to the problem of finding the minimum cardinality of the bitrades.
Bitrades are widely known subject and there are series of papers on Steiner bitrades \cite{HedayatKhosrovshahi}, bitrades in ordered sets \cite{Cho}, Latin bitrades \cite{Cavenagh,PotapovLatinBitrade} and bitrades in coding theory \cite{Ostergard,PotapovCardinalityOfAll}. In more details, connections between bitrades and eigenfunctions are described in
 \cite{KrotovExtendedtrades,KrotovTheminimumvolume,Krotovtezic,KrotovMogilnykhPotapov}.
The problem of finding the minimum size of the support of eigenfunctions was studied for the Johnson graphs in \cite{VMV}, for the Doob graphs in \cite{Bespalov}, for the cubic distance-regular graphs in \cite{Sotnikova} and for the Paley graphs in \cite{GoryainovKabanovShalaginovValyuzhenich}. The problem of finding the minimum cardinality of the support of eigenfunctions of the Hamming graphs $H(n,q)$ was completely solved for $q=2$ in \cite{Krotovtezic} based on ideas from \cite{Potapov}. In \cite{Valyuzhenich} this problem was solved for the second largest eigenvalue and arbitrary $q$.

In this work we find the minimum cardinality of the support of functions from the space $U_{[i,j]}(n,q)$ (a direct sum of eigenspaces of $H(n,q)$ corresponding to consecutive eigenvalues from $(q-1)n-qi$ to $(q-1)n-qj$) and give a characterization of functions with the minimum cardinality of the support for $i+j\le n$, $q\ge 3$ and for $i=j$, $i>\frac{n}{2}$, $q\ge 5$. In particular, we find the minimum cardinality of the support of eigenfunctions of the Hamming graphs $H(n,q)$.

The paper is organized as follows. In Section 2, we introduce basic definitions and notations. In Section 3, we define two families of functions that have the minimum size of the support in the space $U_{[i,j]}(n,q)$ for $i+j\le n$ and for $i+j>n$ respectively. In Section 4, we present auxiliary statements. In Section 5, we find the minimum size of the support of functions from the space  $U_{[i,j]}(n,q)$ for $i+j\le n$ and give a characterization of functions with the minimum cardinality of the support. In Section 6, we find the minimum size of the support of functions from the space $U_{[i,j]}(n,q)$ for $i+j>n$. In Section 7, we provide several curious examples and discuss further problems.

\section{Basic definitions}
Let $G=(V,E)$ be a graph. The set of neighbors of a vertex $x$ is denoted by $N(x)$. A real--valued function $f:V\longrightarrow{\mathbb{R}}$ is called a {\em $\lambda$--eigenfunction} of $G$ if the equality
$$\lambda\cdot f(x)=\sum_{y\in{N(x)}}f(y)$$ holds for any $x\in V$. Note that the vector of values of a $\lambda$--eigenfunction is an eigenvector of the adjacency matrix of $G$ with eigenvalue $\lambda$ or the all-zero vector. The set of all  $\lambda$--eigenfunctions of $G$ is called a {\em $\lambda$--eigenspace} of $G$.
The support of a real--valued function $f$ is the set of nonzeros of $f$. The cardinality of the support of $f$ is denoted by $|f|$.

Let $\Sigma_q=\{0,1,\ldots,q-1\}$. The {\em Hamming distance} $d(x,y)$ between vectors $x=(x_1,\ldots,x_n)$ and $y=(y_1,\ldots,y_n)$ from $\Sigma_{q}^n$ is the number of positions $i$ such that $x_i\neq{y_i}$.
The vertex set of the {\em Hamming graph} $H(n,q)$ is $\Sigma_{q}^n$ and two vertices are adjacent if the Hamming distance between them equals $1$.
It is well known that the set of eigenvalues of the adjacency matrix of $H(n,q)$ is $\{\lambda_{i}(n,q)=n(q-1)-q\cdot i\mid i=0,1,\ldots,n\}$.

Denote by $U_{i}(n,q)$ the $\lambda_{i}(n,q)$--eigenspace of $H(n,q)$. The direct sum of subspaces
$$U_i(n,q)\oplus U_{i+1}(n,q)\oplus\ldots\oplus U_j(n,q)$$ for $0\leq i\leq j\leq n$ is denoted by $U_{[i,j]}(n,q)$.

The {\em Cartesian product} $G\square H$ of graphs $G$ and $H$ is a graph with the vertex set $V(G)\times V(H)$; and
any two vertices $(u,u')$ and $(v,v')$ are adjacent if and only if either
$u=v$ and $u'$ is adjacent to $v'$ in $H$, or
$u'=v'$ and $u$ is adjacent to $v$ in $G$. Let $G=G_1\square G_2$, $f_1:V(G_1)\longrightarrow{\mathbb{R}}$ and $f_2:V(G_2)\longrightarrow{\mathbb{R}}$.
Define the {\em tensor product} $f_1\cdot f_2$ by the following rule: $(f_1\cdot f_2)(x,y)=f_1(x)f_2(y)$ for $(x,y)\in{V(G)}$.

Let us take two vectors $x=(x_1,\ldots,x_n)$ and  $y=(y_1,\ldots,y_m)$. The {\em tensor product} $x\otimes y$ is the vector $(x_1y_1,\ldots,x_1y_m,x_2y_1,\ldots,x_ny_m)$ of length $nm$.

Let $y=(y_1,\ldots,y_{n-1})$ be a vertex of $H(n-1,q)$, $k\in{\Sigma_q}$ and $r\in\{1,2,\ldots,n\}$. We consider the vector $x=(y_1,\ldots,y_{r-1},k,y_{r},\ldots,y_{n-1})$ of length $n$.
Given a function $f:\Sigma_{q}^{n}\longrightarrow{\mathbb{R}}$,  we define the function $f_{k}^{r}:\Sigma_{q}^{n-1}\longrightarrow{\mathbb{R}}$ by the rule $f_{k}^{r}(y)=f(x)$. We note that $f_{k}^{r}=f|_{x_r=k}$.

A function $f:\Sigma_{q}^n\longrightarrow{\mathbb{R}}$ is called {\em uniform} if for any $r\in\{1,2,\ldots,n\}$ there exists $l(r)\in\Sigma_q$ such that $f_{k}^{r}=f_{m}^{r}$ for all $k,m\in{\Sigma_q\setminus \{l(r)\}}$.

Recall that $S_n$ is the set of all permutations of length $n$. Let $f(x_1,x_2,\ldots,x_n)$ be a function and $\sigma\in{S_n}$.  We define the function $f_{\sigma}$ by the following rule: $f_{\sigma}(x)=f(x_{\sigma(1)},x_{\sigma(2)},\ldots,x_{\sigma(n)})$.
\section{Constructions of functions with the minimum size of the support}
We define the function $a_{1}(k,m):\Sigma_{q}^2\longrightarrow{\mathbb{R}}$ for $k,m\in{\Sigma_q}$ by the following rule:
$$
a_1(k,m)(x,y)=\begin{cases}
1,&\text{if $x=k$ and $y\neq m$;}\\
-1,&\text{if $y=m$ and $x\neq k$;}\\
0,&\text{otherwise.}
\end{cases}
$$
We note that $|a_1(k,m)|=2(q-1)$ for $k,m\in{\Sigma_q}$. The set of functions $a_1(k,m)$ where $k,m\in{\Sigma_q}$ is denoted by $A_1$.

We define the function $a_{2}(k,m):\Sigma_{q}\longrightarrow{\mathbb{R}}$ for $k,m\in{\Sigma_q}$ and $k\neq m$ by the rule:
$$
a_2(k,m)(x)=\begin{cases}
1,&\text{if $x=k$;}\\
-1,&\text{if $x=m$;}\\
0,&\text{otherwise.}
\end{cases}
$$
The set of functions $a_2(k,m)$ where $k,m\in{\Sigma_q}$ and $k\neq m$ is denoted by $A_2$.

Let $A_3=\{f:\Sigma_{q}\longrightarrow{\mathbb{R}}\mid f\equiv 1\}$. By the definition of an eigenfunction we see that $A_1\subset U_{1}(2,q)$, $A_2\subset U_{1}(1,q)$ and $A_3\subset U_{0}(1,q)$.

We define the function $a_{4}(m):\Sigma_{q}\longrightarrow{\mathbb{R}}$ for $m\in{\Sigma_q}$ by the rule:
$$
a_4(m)(x)=\begin{cases}
1,&\text{if $x=m$;}\\
0,&\text{otherwise.}
\end{cases}
$$
The set of functions $a_4(m)$ where $m\in{\Sigma_q}$ is denoted by $A_4$.
Note that $A_4\subset U_{[0,1]}(1,q)$.

 \begin{figure}[H]
 \begin{center}
\includegraphics[scale=0.45]{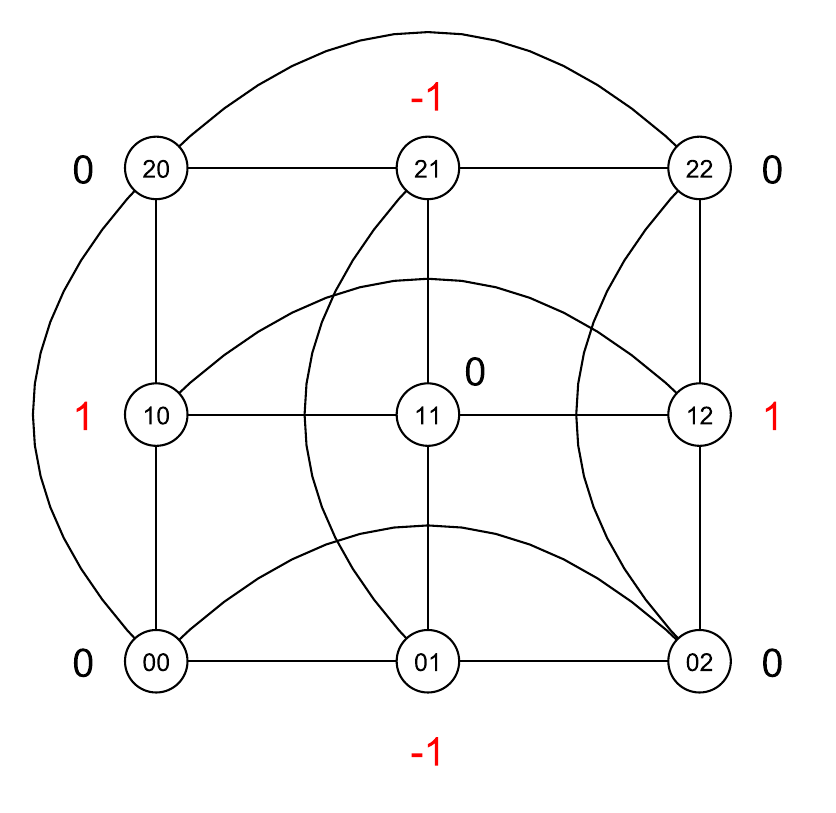}
\end{center}
\caption{Function $a_1(1,1)$ for $q=3$.}\label{A1}
\end{figure}

\begin{figure}[H]
 \begin{center}
\includegraphics[scale=0.35]{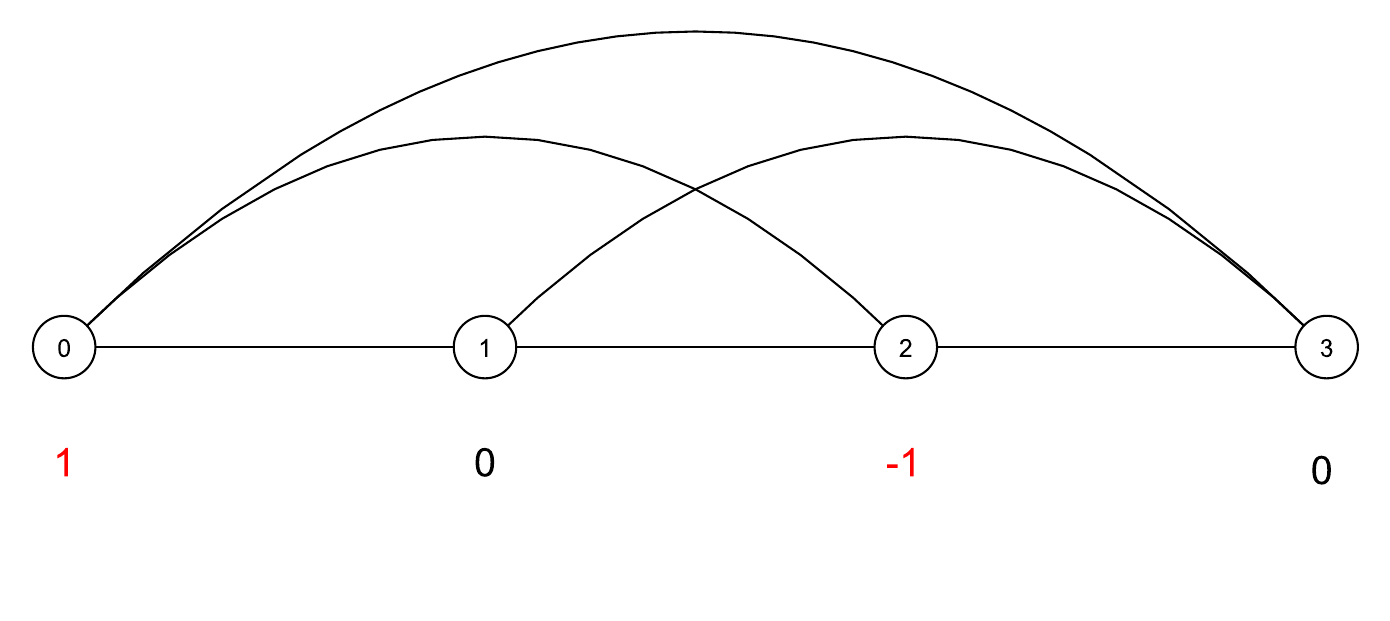}
\end{center}
\caption{Function $a_2(0,2)$ for $q=4$.}\label{A2}
\end{figure}

\begin{figure}[H]
 \begin{center}
\includegraphics[scale=0.35]{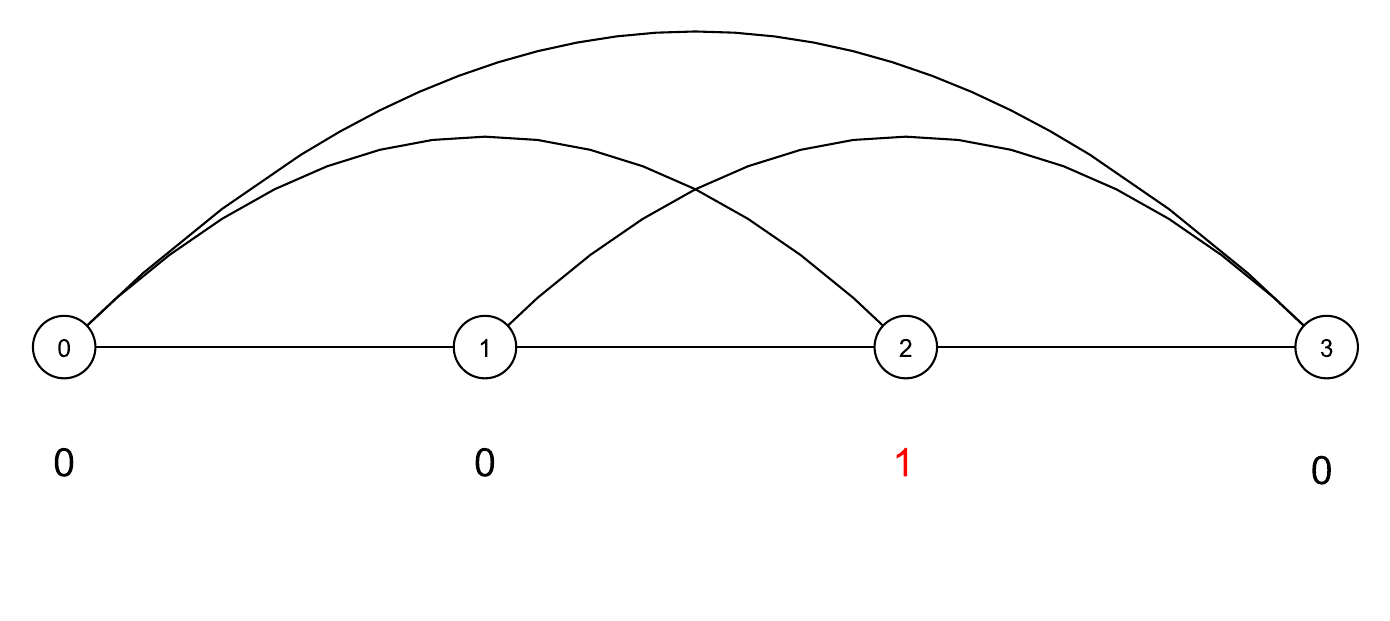}
\end{center}
\caption{Function $a_4(2)$ for $q=4$.}\label{A4}
\end{figure}

The following lemma is a particular case of well known result for so--called NEPS of graphs (see \cite{Cvetkovic}, Theorem 2.3.4):
\begin{lemma}\label{eigenvectors of product}
Let $G_1$ and $G_2$ be two graphs, let $\lambda$ and $\mu$ be eigenvalues of $G_1$ and $G_2$ respectively, and let $x$ and $y$ be eigenvectors for $\lambda$ and $\mu$. Then the graph $G_1\square G_2$ has the eigenvalue $\lambda+\mu$ and $x\otimes y$ is an eigenvector corresponding to $\lambda+\mu$.

\end{lemma}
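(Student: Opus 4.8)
The plan is to reduce the statement to the elementary algebra of the Kronecker product. First I would record the standard description of the adjacency matrix of a Cartesian product: if $A_1$ and $A_2$ are the adjacency matrices of $G_1$ and $G_2$, then, under the lexicographic ordering of $V(G_1)\times V(G_2)$ that matches the definition of $x\otimes y$ given in Section~2, the adjacency matrix of $G_1\square G_2$ is $A_1\otimes I_2 + I_1\otimes A_2$, where $I_1,I_2$ are identity matrices of the appropriate sizes. This is immediate from the definition of adjacency in $G_1\square G_2$: a pair $(u,u')$ is adjacent to $(v,v')$ exactly when either $u'=v'$ and $u$ is adjacent to $v$ in $G_1$, which contributes the block structure of $A_1\otimes I_2$, or $u=v$ and $u'$ is adjacent to $v'$ in $G_2$, which contributes $I_1\otimes A_2$.

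Next I would apply the mixed-product rule $(P\otimes Q)(R\otimes S)=(PR)\otimes(QS)$ to compute
$$(A_1\otimes I_2 + I_1\otimes A_2)(x\otimes y) = (A_1x)\otimes y + x\otimes(A_2y) = (\lambda x)\otimes y + x\otimes(\mu y) = (\lambda+\mu)(x\otimes y).$$
Since $x$ and $y$ are eigenvectors, they are nonzero, hence $x\otimes y\neq 0$; thus $x\otimes y$ is a genuine eigenvector and $\lambda+\mu$ is an eigenvalue of $G_1\square G_2$.

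Alternatively, and perhaps more in the spirit of the rest of the paper, one can argue directly with the eigenfunction definition. Interpreting $x$ and $y$ as functions $f_1$ on $V(G_1)$ and $f_2$ on $V(G_2)$, and $x\otimes y$ as the tensor product $f_1\cdot f_2$, the neighbourhood of $(u,u')$ in $G_1\square G_2$ splits as $\{(v,u'):v\in N(u)\}\cup\{(u,v'):v'\in N(u')\}$, so
$$\sum_{(v,v')\in N(u,u')}(f_1\cdot f_2)(v,v') = f_2(u')\sum_{v\in N(u)}f_1(v) + f_1(u)\sum_{v'\in N(u')}f_2(v') = \lambda f_1(u)f_2(u') + \mu f_1(u)f_2(u').$$
Either way there is essentially no obstacle: the only point requiring a moment's care is checking that the indexing convention for $x\otimes y$ from Section~2 agrees with the blockwise action of $A_1\otimes I_2 + I_1\otimes A_2$ (equivalently, with the neighbourhood splitting above), which is routine bookkeeping.
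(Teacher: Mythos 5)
Your proof is correct. Note, however, that the paper does not prove this lemma at all: it simply cites it as a special case of a well-known result on NEPS of graphs (Cvetkovi\'c, Rowlinson, Simi\'c, Theorem~2.3.4), so there is no in-paper argument to compare against. Your first argument --- writing the adjacency matrix of $G_1\square G_2$ as the Kronecker sum $A_1\otimes I_2 + I_1\otimes A_2$ and applying the mixed-product rule, together with the observation that $x\otimes y\neq 0$ --- is exactly the standard proof that the cited general theorem specializes to, and it is complete. Your alternative argument via the eigenfunction definition and the splitting of the neighbourhood $N(u,u')$ is also correct and arguably fits the paper's style better, since the paper works throughout with eigenfunctions rather than matrices (compare the proof of Lemma~\ref{reduction 1}). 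The only caveat you already flag yourself: one must check that the coordinate ordering in the paper's definition of $x\otimes y$ matches the lexicographic ordering implicit in $A_1\otimes I_2 + I_1\otimes A_2$, which it does. In short, you supply a self-contained elementary proof where the paper relies on an external reference; both are valid, and yours makes the note more self-contained at the cost of a short computation.
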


Since $$\lambda_{i}(m,q)+\lambda_{j}(n,q)=\lambda_{i+j}(m+n,q)$$ and $H(m+n,q)=H(m,q)\square H(n,q)$, by Lemma~\ref{eigenvectors of product} we immediately obtain the following corollary:
\begin{corollary}\label{product of functions}
Let $f_1\in{U_{i}(m,q)}$ and $f_2\in{U_{j}(n,q)}$. Then $f_1\cdot f_2\in{U_{i+j}(m+n,q)}$.

\end{corollary}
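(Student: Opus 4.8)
The plan is to deduce the statement directly from Lemma~\ref{eigenvectors of product} after matching up three ingredients: the graph decomposition $H(m+n,q)=H(m,q)\,\square\,H(n,q)$, the eigenvalue arithmetic $\lambda_i(m,q)+\lambda_j(n,q)=\lambda_{i+j}(m+n,q)$, and the identification of the tensor product of functions with the tensor product of their value vectors. The first identity is immediate from the definitions: a vertex of $H(m+n,q)$ splits into its first $m$ and its last $n$ coordinates, and two vectors are at Hamming distance $1$ exactly when one block agrees and the other differs in a single coordinate, which is the adjacency rule of the Cartesian product. So I may identify $V(H(m+n,q))$ with $V(H(m,q))\times V(H(n,q))$ compatibly with edges.

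Next I would check the eigenvalue identity, which is pure arithmetic:
$$\lambda_i(m,q)+\lambda_j(n,q)=\big(m(q-1)-qi\big)+\big(n(q-1)-qj\big)=(m+n)(q-1)-q(i+j)=\lambda_{i+j}(m+n,q).$$
I would then observe that, under the vertex identification above, the value vector of $f_1\cdot f_2$ is exactly the vector tensor product $x\otimes y$ of the value vectors $x$ of $f_1$ and $y$ of $f_2$, since $(f_1\cdot f_2)(u,v)=f_1(u)f_2(v)$ is precisely the formula defining the coordinates of $x\otimes y$. Because a function is a $\lambda$-eigenfunction of a graph exactly when its value vector is a $\lambda$-eigenvector of the adjacency matrix (as noted in Section~2), the hypotheses $f_1\in U_i(m,q)$ and $f_2\in U_j(n,q)$ say that $x$ is a $\lambda_i(m,q)$-eigenvector of $H(m,q)$ and $y$ is a $\lambda_j(n,q)$-eigenvector of $H(n,q)$.

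Applying Lemma~\ref{eigenvectors of product} with $G_1=H(m,q)$, $G_2=H(n,q)$, $\lambda=\lambda_i(m,q)$ and $\mu=\lambda_j(n,q)$ then yields that $x\otimes y$ is an eigenvector of $G_1\square G_2=H(m+n,q)$ for the eigenvalue $\lambda+\mu=\lambda_{i+j}(m+n,q)$; equivalently $f_1\cdot f_2$ is a $\lambda_{i+j}(m+n,q)$-eigenfunction, so $f_1\cdot f_2\in U_{i+j}(m+n,q)$. There is essentially no obstacle here: the only point needing a moment's care is the bookkeeping that the ordering of $V(G_1)\times V(G_2)$ implicit in the definition of $x\otimes y$ is the one under which the adjacency matrix of $G_1\square G_2$ acts as required by the cited NEPS theorem, and this is exactly what that theorem provides.
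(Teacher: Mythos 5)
Your proposal is correct and follows the same route as the paper: the paper derives the corollary immediately from Lemma~\ref{eigenvectors of product} together with the identity $\lambda_{i}(m,q)+\lambda_{j}(n,q)=\lambda_{i+j}(m+n,q)$ and the decomposition $H(m+n,q)=H(m,q)\,\square\,H(n,q)$, which is precisely your argument. You merely spell out the bookkeeping (the identification of the value vector of $f_1\cdot f_2$ with $x\otimes y$) that the paper leaves implicit.
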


Let $i+j\le n$. We say that a function $f:\Sigma_{q}^n\longrightarrow{\mathbb{R}}$ belongs to the class $F_{1}(n,q,i,j)$ if $$f=c\cdot \prod_{k=1}^{i}g_{k}\cdot \prod_{k=1}^{n-i-j}h_{k}\cdot \prod_{k=1}^{j-i}v_{k},$$ where $c$ is a constant, $g_k\in{A_1}$ for $k\in{[1,i]}$, $h_k\in{A_3}$ for $k\in{[1,n-i-j]}$ and $v_k\in{A_4}$ for $k\in{[1,j-i]}$.

Let $i+j>n$. We say that a function $f:\Sigma_{q}^n\longrightarrow{\mathbb{R}}$ belongs to the class $F_{2}(n,q,i,j)$ if $$f=c\cdot \prod_{k=1}^{n-j}g_{k}\cdot \prod_{k=1}^{i+j-n}h_{k}\cdot \prod_{k=1}^{j-i}v_{k},$$ where $c$ is a constant, $g_k\in{A_1}$ for $k\in{[1,n-j]}$, $h_k\in{A_2}$ for $k\in{[1,i+j-n]}$ and $v_k\in{A_4}$ for $k\in{[1,j-i]}$.
\begin{lemma}\label{constructions of eigenfunctions}
The following statements are true:
\begin{enumerate}
\item Let $i+j\le n$ and $f\in{F_{1}(n,q,i,j)}$. Then $f\in{U_{[i,j]}(n,q)}$ and $|f|=2^{i}(q-1)^{i}q^{n-i-j}$.

  \item Let $i+j>n$ and $f\in{F_{2}(n,q,i,j)}$. Then $f\in{U_{[i,j]}(n,q)}$ and $|f|=2^{i}(q-1)^{n-j}$.

\end{enumerate}

\end{lemma}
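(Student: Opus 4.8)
The plan is to derive both statements from two facts established earlier, plus two elementary observations about tensor products. The two facts are Corollary~\ref{product of functions} (eigenvalue indices add under the Cartesian product) and the inclusions $A_1\subset U_1(2,q)$, $A_2\subset U_1(1,q)$, $A_3\subset U_0(1,q)$, $A_4\subset U_{[0,1]}(1,q)$. The two observations are: (i) the tensor product $f_1\cdot f_2$ is bilinear in $(f_1,f_2)$ and, under the canonical identification $\Sigma_q^{a}\times\Sigma_q^{b}\cong\Sigma_q^{a+b}$, associative; (ii) $\operatorname{supp}(f_1\cdot f_2)=\operatorname{supp}(f_1)\times\operatorname{supp}(f_2)$, so $|f_1\cdot f_2|=|f_1|\,|f_2|$, and by induction this multiplicativity extends to any finite tensor product.

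For the eigenspace membership I would argue as follows. Since $A_4\subset U_{[0,1]}(1,q)=U_0(1,q)\oplus U_1(1,q)$, each factor $v_k$ splits uniquely as $v_k=v_k^{(0)}+v_k^{(1)}$ with $v_k^{(s)}\in U_s(1,q)$. Keeping the factors in their original order and expanding $\prod_{k}v_k$ by bilinearity gives a sum of $2^{j-i}$ tensor products, one for each subset $S$ of the $v$-factors at which the $U_1$-component is chosen. Distributing the remaining (unsplit) factors over this sum, $f$ becomes $c$ times a sum of tensor products, each of which is a product of functions lying in single eigenspaces: in the $F_1$ case these are $i$ factors from $U_1(2,q)$, $n-i-j$ factors from $U_0(1,q)$, and $j-i$ factors from $U_0(1,q)\cup U_1(1,q)$ with exactly $|S|$ of the latter in $U_1(1,q)$. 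Applying Corollary~\ref{product of functions} repeatedly (using associativity), such a summand lies in $U_{i+|S|}(n,q)$; the ground dimension is $2i+(n-i-j)+(j-i)=n$, as required. As $S$ runs over all subsets, $|S|$ runs over $\{0,1,\dots,j-i\}$, so the sum lies in $U_{[i,j]}(n,q)$, and multiplying by $c$ does not change this. The $F_2$ case is identical: the index contributions are $n-j$ (from $n-j$ factors in $U_1(2,q)$), $i+j-n$ (from the $h_k\in A_2\subset U_1(1,q)$), and $|S|$, for a total of $i+|S|\in[i,j]$, with ground dimension $2(n-j)+(i+j-n)+(j-i)=n$.

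For the support sizes I would use observation (ii) and the known cardinalities of the factors: $|a_1(k,m)|=2(q-1)$, $|h|=q$ for $h\in A_3$, $|a_2(k,m)|=2$, and $|v|=1$ for $v\in A_4$. Assuming $c\neq0$ (otherwise $f\equiv0$ and the formula is vacuous), this gives $|f|=(2(q-1))^{i}\,q^{\,n-i-j}\cdot1^{\,j-i}=2^{i}(q-1)^{i}q^{\,n-i-j}$ in the first case and $|f|=(2(q-1))^{\,n-j}\,2^{\,i+j-n}\cdot1^{\,j-i}=2^{i}(q-1)^{\,n-j}$ in the second.

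I expect the only genuine obstacle to be a conceptual one: Corollary~\ref{product of functions} cannot be applied directly to the $v_k$ factors, since $A_4$ does not lie in a single eigenspace, so one must first split those factors and use the bilinearity and associativity of the tensor product. Everything else — the dimension and index bookkeeping, and the arithmetic with powers — is routine.
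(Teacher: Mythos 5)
Your proposal is correct and follows essentially the same route as the paper's (very terse) proof: the memberships $A_1\subset U_1(2,q)$, $A_2\subset U_1(1,q)$, $A_3\subset U_0(1,q)$, $A_4\subset U_{[0,1]}(1,q)$, Corollary~\ref{product of functions} for the eigenspace bookkeeping, and multiplicativity of support sizes. Your explicit splitting of the $A_4$ factors into their $U_0$ and $U_1$ components and expansion by bilinearity is exactly the detail the paper leaves implicit, so there is nothing to add.
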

\begin{proof}
As we noted above $A_1\subset U_{1}(2,q)$, $A_2\subset U_{1}(1,q)$, $A_3\subset U_{0}(1,q)$ and $A_4\subset U_{[0,1]}(1,q)$. Then using Corollary~\ref{product of functions} and the fact that $|f_1\cdot f_2|=|f_2|\cdot |f_2|$, we obtain the statement of this lemma.
\end{proof}

In what follows, we prove that functions from $F_{1}(n,q,i,j)$ and $F_{2}(n,q,i,j)$ have the minimum size of the support in the subspace $U_{[i,j]}(n,q)$ for $i+j\le n$, $q\geq3$ and for $i+j>n$, $q\geq4$ respectively.
\section{Reduction Lemma}
In this section we describe a connection between eigenspaces of the Hamming graphs $H(n,q)$ and $H(n-1,q)$.

\begin{lemma}\label{reduction 1}

Let  $f\in{U_{i}(n,q)}$ and $r\in\{1,2,\ldots,n\}$. Then the following statements are true:
\begin{enumerate}
\item $f_{k}^{r}-f_{m}^{r}\in{U_{i-1}(n-1,q)}$ for $k,m\in\Sigma_q$.

\item $\sum_{k=0}^{q-1}f_{k}^{r}\in{U_{i}(n-1,q)}$.
\item $f_{k}^{r}\in{U_{i-1}(n-1,q)\oplus U_{i}(n-1,q)}$ for $k\in\Sigma_q$.
\end{enumerate}
\end{lemma}
\begin{proof}

1. The first case of this lemma was proved in \cite{Valyuzhenich} (Lemma 1).

2. Let $t=(t_1,t_2,\ldots,t_n)$ be a vertex of $H(n,q)$. Let $$x_{r}(m)=(t_1,\ldots,t_{r-1},m,t_{r+1},\ldots,t_n)$$ for $m\in\Sigma_q$ and $$x_{i,r}(a,m)=(t_1,\ldots,t_{i-1},a,t_{i+1},\ldots,t_{r-1},m,t_{r+1},\ldots,t_n)$$ for $a,m\in\Sigma_q$ and $i\in\{1,2,\ldots,n\}\setminus \{r\}$.  The set of neighbors $z=(z_1,\ldots,z_n)$ of $x_{r}(m)$ such that $z_r=m$ is denoted by $N(m,r)$. We see that $N(m,r)=\{x_{i,r}(a,m)\mid i\neq{r},a\neq t_i\}$. We note that $$N(x_{r}(m))=(\{x_{r}(0),x_{r}(1),\ldots,x_{r}(q-1)\}\setminus\{x_{r}(m)\})\cup\ N(m,r).$$ Since $f$ is an eigenfunction, we have
$$\lambda_{i}(n,q)\cdot f(x_{r}(m))=\sum_{i\neq{r},a\neq{t_i}}f(x_{i,r}(a,m))+\sum_{i=0}^{q-1}f(x_{r}(i))-f(x_{r}(m)).$$
Hence we obtain that $$(\lambda_{i}(n,q)-(q-1))\cdot \sum_{m=0}^{q-1}f(x_{r}(m))=\sum_{i\neq{r},a\neq{t_i}}\sum_{m=0}^{q-1}f(x_{i,r}(a,m)).$$

Let $y_r$ and $y_{i}(a)$ be the vectors obtained by removing the $r$th coordinate in $x_{r}(m)$ and $x_{i,r}(a,m)$ respectively.
Then $$\lambda_{i}(n-1,q)\cdot (\sum_{m=0}^{q-1}f_{m}^{r})(y_r)=\sum_{i\neq{r},a\neq{t_i}}(\sum_{m=0}^{q-1}f_{m}^{r})(y_{i}(a)).$$
Since $y_r$ has neighbors $y_i(a)$ for $i\neq r$ and $a\neq t_i$ in $H(n-1,q)$, we prove that $\sum_{m=0}^{q-1}f_{m}^{r}$ is a $\lambda_{i}(n-1,q)$-eigenfunction of $H(n-1,q)$.

3. By the first case of this lemma we have that $f_{k}^{r}-f_{m}^{r}\in{U_{i-1}(n-1,q)}$ for $m\neq k$. Hence $$(q-1)f_{k}^{r}-\sum_{t\in\Sigma_q,t\neq k}f_{t}^{r}\in{U_{i-1}(n-1,q)}.$$
The second case of this lemma implies that $$\sum_{t=0}^{q-1}f_{t}^{r}\in{U_{i}(n-1,q)}.$$
Hence $q\cdot f_{k}^{r}\in{U_{i-1}(n-1,q)\oplus U_{i}(n-1,q)}$. Thus $f_{k}^{r}\in{U_{i-1}(n-1,q)\oplus U_{i}(n-1,q)}$.
\end{proof}

Using the previous lemma for $U_{k}(n,q)$, where $i\le k\le j$, we obtain the following result:
\begin{lemma}\label{reduction 2}

Let $f\in{U_{[i,j]}(n,q)}$ and $r\in\{1,2,\ldots,n\}$. Then the following statements are true:
\begin{enumerate}
\item $f_{k}^{r}-f_{m}^{r}\in{U_{[i-1,j-1]}(n-1,q)}$ for $k,m\in\Sigma_q$.

\item $\sum_{k=0}^{q-1}f_{k}^{r}\in{U_{[i,j]}(n-1,q)}$.
\item $f_{k}^{r}\in{U_{[i-1,j]}(n-1,q)}$ for $k\in\Sigma_q$.
\end{enumerate}
\end{lemma}

\begin{lemma}\label{reduction 3}
Let $f\in{U_{[i,j]}(n,q)}$, let $r\in\{1,2,\ldots,n\}$, and let $m\in\Sigma_q$. If $f_{k}^{r}\equiv 0$ for any $k\in\Sigma_{q}\setminus\{m\}$, then $f_{m}^{r}\in{U_{[i,j-1]}(n,q)}$.
\end{lemma}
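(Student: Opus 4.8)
The plan is to deduce everything from the three parts of Lemma~\ref{reduction 2}, applied with the given coordinate~$r$, using only the hypothesis that $f_k^r\equiv 0$ for every $k\neq m$.

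First I would note that $\sum_{k=0}^{q-1}f_k^r=f_m^r$, since all the other summands vanish. Part~2 of Lemma~\ref{reduction 2} then yields at once $f_m^r\in U_{[i,j]}(n-1,q)$. Next, fix any $k\in\Sigma_q\setminus\{m\}$ (possible as $q\ge 2$); by part~1 of Lemma~\ref{reduction 2} we have $f_k^r-f_m^r\in U_{[i-1,j-1]}(n-1,q)$, and since $f_k^r\equiv 0$ this says $f_m^r\in U_{[i-1,j-1]}(n-1,q)$.

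It remains to intersect these two memberships. The eigenspaces $U_0(n-1,q),\dots,U_{n-1}(n-1,q)$ of the symmetric adjacency matrix of $H(n-1,q)$ are pairwise independent and span $\mathbb{R}^{\Sigma_q^{n-1}}$, so a vector lying in a sum of some of them has all its other eigencomponents zero; consequently
\[
U_{[i,j]}(n-1,q)\cap U_{[i-1,j-1]}(n-1,q)=U_{[i,j-1]}(n-1,q),
\]
which gives $f_m^r\in U_{[i,j-1]}(n-1,q)$, as required. In the boundary cases $i=0$ or $j=n$ one uses the convention $U_t(n-1,q)=\{0\}$ for $t<0$ or $t>n-1$ (consistent with Lemma~\ref{reduction 1}), and the argument is unchanged.

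I do not expect a genuine obstacle: the two membership facts are immediate substitutions into Lemma~\ref{reduction 2}, and the only step needing a word of justification is the last equality of spaces, namely that membership in a direct sum of eigenspaces is detected componentwise in the spectral decomposition — standard for a symmetric matrix with distinct eigenvalues.
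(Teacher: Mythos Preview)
Your proof is correct and follows essentially the same approach as the paper: both apply parts~1 and~2 of Lemma~\ref{reduction 2} to obtain $f_m^r\in U_{[i-1,j-1]}(n-1,q)$ and $f_m^r\in U_{[i,j]}(n-1,q)$, then intersect. Your write-up simply spells out the intersection step and the boundary conventions in more detail than the paper does.
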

\begin{proof}
For $k\neq m$ we obtain $f_{k}^{r}-f_{m}^{r}\in{U_{[i-1,j-1]}(n-1,q)}$ due to Lemma~\ref{reduction 2}(1). Hence $f_{m}^{r}\in{U_{[i-1,j-1]}(n-1,q)}$. Lemma~\ref{reduction 2}(2) implies that $f_{m}^r\in{U_{[i,j]}(n-1,q)}$. Then $f_{m}^{r}\in{U_{[i,j-1]}(n-1,q)}$.
\end{proof}

\begin{lemma}\label{nizhnya ocenka for nositelya}
Let $f:\Sigma_{q}^{n}\longrightarrow{\mathbb{R}}$, let $r\in\{1,2,\ldots,n\}$ and let $f_{0}^{r}=f_{1}^{r}=\ldots=f_{q-2}^{r}$. Then $$|f|\geq(q-2)|f_{0}^{r}|+|f_{q-2}^{r}-f_{q-1}^{r}|.$$
\end{lemma}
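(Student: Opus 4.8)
The plan is to split the support of $f$ according to the value of the $r$th coordinate and then apply a triangle-type inequality for supports.

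First, I would note that the hyperplanes $\{x\in\Sigma_q^n : x_r=k\}$, $k\in\Sigma_q$, partition $\Sigma_q^n$, and that under the natural identification of $\{x\in\Sigma_q^n : x_r=k\}$ with $\Sigma_q^{n-1}$ the restriction of $f$ to this hyperplane is exactly $f_k^r$. Consequently the support of $f$ is the disjoint union over $k$ of the supports of the $f_k^r$, and therefore
\[
|f|=\sum_{k=0}^{q-1}|f_k^r|.
\]

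Next, I would use the hypothesis $f_0^r=f_1^r=\cdots=f_{q-2}^r$: the $q-1$ numbers $|f_0^r|,\dots,|f_{q-2}^r|$ are all equal, so the identity above becomes
\[
|f|=(q-1)|f_0^r|+|f_{q-1}^r|=(q-2)|f_0^r|+|f_{q-2}^r|+|f_{q-1}^r|.
\]
Finally, I would invoke the elementary fact that for any real-valued functions $g$ and $h$ on a common domain one has $\mathrm{supp}(g-h)\subseteq\mathrm{supp}(g)\cup\mathrm{supp}(h)$, hence $|g-h|\le|g|+|h|$. Applying this with $g=f_{q-2}^r$ and $h=f_{q-1}^r$ gives $|f_{q-2}^r|+|f_{q-1}^r|\ge|f_{q-2}^r-f_{q-1}^r|$, and substituting this into the previous display yields
\[
|f|\ge(q-2)|f_0^r|+|f_{q-2}^r-f_{q-1}^r|,
\]
which is the desired bound.

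There is essentially no serious obstacle here; the statement is purely combinatorial and does not use the eigenfunction property at all. The only points requiring a little care are the bookkeeping of which restrictions $f_k^r$ coincide, and the observation that distinct values of $x_r$ contribute disjoint parts of $\mathrm{supp}(f)$, so that $|f|$ is exactly (and in particular at least) the sum of the $|f_k^r|$.
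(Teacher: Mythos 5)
Your proposal is correct and follows essentially the same argument as the paper: decompose $|f|=\sum_{k=0}^{q-1}|f_k^r|$, use the equality of $f_0^r,\ldots,f_{q-2}^r$ to write this as $(q-2)|f_0^r|+|f_{q-2}^r|+|f_{q-1}^r|$, and apply $|f_{q-2}^r|+|f_{q-1}^r|\geq|f_{q-2}^r-f_{q-1}^r|$. No gaps; the extra remarks about the partition by the $r$th coordinate just make explicit what the paper leaves implicit.
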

\begin{proof}
We have $$|f|=\sum_{k=0}^{q-1}|f_{k}^{r}|=(q-2)|f_{0}^{r}|+|f_{q-2}^{r}|+|f_{q-1}^{r}|\geq{(q-2)|f_{0}^{r}|+|f_{q-2}^{r}-f_{q-1}^{r}|}.$$
\end{proof}

In Sections 5 and 6 we will use the main results of this section for inductive arguments.

\section{Case $i+j\le n$}
In this section we prove the first main theorem of this paper:
\begin{theorem}\label{Theorem 1.1}
Let $f\in{U_{[i,j]}(n,q)}$, $i+j\le n$, $q\geq 3$ and $f\not\equiv 0$. Then $|f|\geq 2^{i}(q-1)^{i}q^{n-i-j}$. Moreover, the equality $|f|=2^{i}(q-1)^{i}q^{n-i-j}$ holds if and only if $f_{\sigma}\in F_{1}(n,q,i,j)$ for some permutation $\sigma\in{S_n}$.
\end{theorem}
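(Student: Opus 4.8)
The plan is to argue by induction on $n$, using the reduction lemmas of Section~4 to pass from $U_{[i,j]}(n,q)$ to eigenspaces on $H(n-1,q)$. The base of the induction is the case $i=j=0$, where $U_{[0,0]}(n,q)$ consists of the constant functions, so $|f|=q^{n}=2^{0}(q-1)^{0}q^{n}$ and $f\in F_{1}(n,q,0,0)$ automatically; small $n$ and the purely ``indicator'' case $i=0$ are treated analogously (there the derivatives of Lemma~\ref{reduction 2} stay inside $U_{[0,\cdot]}$). For the inductive step I fix a coordinate $r$ and study the restrictions $f_{0}^{r},\dots,f_{q-1}^{r}$, together with the identities $f_{k}^{r}-f_{m}^{r}\in U_{[i-1,j-1]}(n-1,q)$, $\sum_{k}f_{k}^{r}\in U_{[i,j]}(n-1,q)$ and $f_{k}^{r}\in U_{[i-1,j]}(n-1,q)$ of Lemma~\ref{reduction 2}, and the obvious identity $|f|=\sum_{k}|f_{k}^{r}|$.

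First, if $f$ is trivial at some coordinate $r$, i.e.\ $f_{0}^{r}=\dots=f_{q-1}^{r}=:g$, then $g\in U_{[i,j]}(n-1,q)$, $g\not\equiv 0$ and $|f|=q|g|$; the induction hypothesis gives $|f|\geq 2^{i}(q-1)^{i}q^{n-i-j}$, with equality forcing $g$ to be extremal, hence $g_{\tau}\in F_{1}(n-1,q,i,j)$ and $f_{\sigma}\in F_{1}(n,q,i,j)$ after inserting an $A_{3}$ factor at position $r$. Otherwise $f$ depends on every coordinate. Pick any $r$ and set $g=\tfrac1q\sum_{k}f_{k}^{r}\in U_{[i,j]}(n-1,q)$. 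If $g\equiv 0$, then $\sum_{k}f_{k}^{r}=0$, so at least two of the $f_{k}^{r}$ are nonzero, and each nonzero one equals $f_{k}^{r}-g\in U_{[i-1,j-1]}(n-1,q)$; the induction hypothesis then yields $|f|\geq 2\cdot 2^{i-1}(q-1)^{i-1}q^{n-i-j+1}=\tfrac{q}{q-1}\cdot 2^{i}(q-1)^{i}q^{n-i-j}$, which is strictly larger than the target, so this case produces no extremal function.

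The heart of the argument is the case $g\not\equiv 0$, where I want to invoke Lemma~\ref{nizhnya ocenka for nositelya}: if, after relabelling $\Sigma_{q}$, one has $f_{0}^{r}=\dots=f_{q-2}^{r}=:G$ and $f_{q-1}^{r}=:G'$, then $|f|\geq (q-2)|G|+|G-G'|$, where $G\in U_{[i-1,j]}(n-1,q)$ and $G-G'\in U_{[i-1,j-1]}(n-1,q)$, $G-G'\not\equiv 0$. If $G\not\equiv 0$, substituting the inductive lower bounds $|G|\geq 2^{i-1}(q-1)^{i-1}q^{n-i-j}$ and $|G-G'|\geq 2^{i-1}(q-1)^{i-1}q^{n-i-j+1}$ into $(q-2)|G|+|G-G'|$ gives exactly $2^{i}(q-1)^{i}q^{n-i-j}$, and tracing the equalities shows that $|f|$ equals the target iff $G$ is extremal in $U_{[i-1,j]}(n-1,q)$, $G-G'$ is extremal in $U_{[i-1,j-1]}(n-1,q)$, and $\mathrm{supp}(G)\cap\mathrm{supp}(G')=\varnothing$; unwinding these via the induction hypothesis applied to $G$ and to $G-G'$ shows that coordinate $r$ pairs with one further coordinate into an $A_{1}$ factor and the remainder assembles into an element of $F_{1}(n,q,i,j)$. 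If instead $G\equiv 0$, then $f_{k}^{r}\equiv 0$ for all $k\neq q-1$, so Lemma~\ref{reduction 3} gives $f_{q-1}^{r}\in U_{[i,j-1]}(n-1,q)$ and $|f|=|f_{q-1}^{r}|$; the induction hypothesis closes this subcase and, in the equality case, contributes an $A_{4}$ factor at position $r$.

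I expect three main obstacles. (i) In the case $g\not\equiv 0$ one must still handle coordinates whose restriction tuple does not have the shape needed for Lemma~\ref{nizhnya ocenka for nositelya} --- namely at least three distinct restriction values, or (when $q\geq 4$) two values with balanced multiplicities; there one has to show, by a finer count of $\sum_{y}|\{k:f_{k}^{r}(y)\neq 0\}|$ combined with the inductive bounds on the pairwise differences $f_{k}^{r}-f_{m}^{r}$, that $|f|$ strictly exceeds the target, so such configurations never occur in the extremal case. (ii) The boundary $n=i+j$ is delicate: there the ``trivial coordinate'' reduction lands in $U_{[i,j]}(n-1,q)$ with $n-1<i+j$, outside the present regime, and one resolves this either by appealing to the companion bound of Section~6 (for $q\geq 4$) or by iterating the stripping of trivial coordinates until it is forced to stop (since $U_{[i,j]}(m,q)=0$ once $m<j$) and then analysing the resulting function directly. (iii) The equality characterization requires care in \emph{assembling} the final conclusion: from the facts that $G$ and $G-G'$ lie in the respective classes $F_{1}$ up to permutations of the remaining $n-1$ coordinates and have disjoint supports, one must produce a single $\sigma\in S_{n}$ with $f_{\sigma}\in F_{1}(n,q,i,j)$, verifying that the two permutations are compatible and that coordinate $r$ together with the ``extra'' coordinate coming from $G-G'$ genuinely forms an $A_{1}$ block; this bookkeeping, together with establishing the forced disjointness of the relevant supports, is the most technical part of the argument.
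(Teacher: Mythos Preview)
Your overall plan coincides with the paper's: induction on $(n,i,j)$ using Lemma~\ref{reduction 2}, forcing uniformity, and then exploiting the inequality of Lemma~\ref{nizhnya ocenka for nositelya}. The lower-bound skeleton is correct. Two points deserve comment.

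First, obstacle~(ii) is self-inflicted. The paper never performs the ``trivial coordinate'' reduction $f\mapsto g$ with $|f|=q|g|$; it simply observes that a nonconstant $f$ has \emph{some} coordinate $r$ with two distinct restrictions and works there. This keeps every inductive call inside the regime $n\ge i+j$, so nothing from Section~6 is needed and no separate treatment of $q=3$ at the boundary arises. If you drop that case and always pick a nontrivial coordinate, your outline matches the paper's exactly and obstacle~(ii) disappears.

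Second, your handling of the equality case when $G\not\equiv 0$ is where the real work lies, and your sketch is off in two places. The equality condition in Lemma~\ref{nizhnya ocenka for nositelya} is $|G|+|G'|=|G-G'|$, i.e.\ $G(y)G'(y)\le 0$ pointwise, not $\operatorname{supp}(G)\cap\operatorname{supp}(G')=\varnothing$. More importantly, the paper does \emph{not} try to reconcile the two extremal decompositions of $G$ and of $G-G'$ (which a priori come with different permutations). Instead it uses only the extremal structure of $G$: since $G_{\pi}\in F_{1}(n-1,q,i-1,j)$ and $i-1<j$, $G$ has an $A_{4}$ factor at some second coordinate (say $x_{n-1}$), so $f|_{x_{n-1}=a,\,x_{n}=b}\equiv 0$ for $a,b\le q-2$. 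The paper then invokes uniformity of $f$ at coordinate $n-1$, counts $|f|$ as a sum over the $2\times 2$ block $\{x_{n-1},x_{n}\}\in\{q-1\}\cup\Sigma_{q}$, shows the ``corner'' $\varphi=f|_{x_{n-1}=q-1,\,x_n=q-1}$ must vanish, and finally proves $f'_{q-2}+h'_{q-2}\equiv 0$ by squeezing it between $U_{[i-1,j-1]}$ and $U_{[i-2,j-2]}$ to land in $U_{[i-1,j-2]}$ and comparing supports. This yields $f=f'_{q-2}\cdot a_{1}(q-1,q-1)$ directly, so the ``compatibility of permutations'' problem you flag never arises. Your proposed route through $G$ and $G-G'$ simultaneously is not obviously workable; the paper's two-coordinate analysis is the missing ingredient.

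Obstacle~(i) is genuine but routine: the paper's Lemma~\ref{Lemma o ravnomernosti 1} dispatches it by pairing two disjoint nonzero differences (for $q\ge 4$) or averaging the three pairwise differences (for $q=3$), each bounded below by induction.
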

\begin{proof}
Lemma~\ref{constructions of eigenfunctions} implies that if $f\in F_{1}(n,q,i,j)$, then $f\in{U_{[i,j]}(n,q)}$ and $|f|=2^{i}(q-1)^{i}q^{n-i-j}$.

In what follows, in this theorem we assume that $|f|\leq 2^{i}(q-1)^{i}q^{n-i-j}$.
Let us prove the theorem by induction on $n$, $i$ and $j$.
Suppose that $f$ is a constant. Then $f$ is a $\lambda_{0}(n,q)$-eigenfunction of $H(n,q)$, i.e $f\in U_{0}(n,q)$. In this case $|f|=q^n$ and the claim of the theorem holds.
So, we can assume that $f_{k}^{r}\neq f_{m}^{r}$ for some $k,m\in\Sigma_q$ and $r\in\{1,2,\ldots,n\}$. Without loss of generality, we assume that $r=n$. For the function $f_{k}^{n}$ in the proof of this theorem we will use the more convenient notation $f_k$.

Now we prove the theorem for $i=0$.
\begin{lemma}\label{Lemma 1.1}
Let $f\in{U_{[0,j]}(n,q)}$, $j\le n$, $q\geq 3$ and $f\not\equiv 0$. Then $|f|\geq{q^{n-j}}$. Moreover $|f|={q^{n-j}}$ if and only if $f_{\sigma}\in F_{1}(n,q,0,j)$ for some permutation $\sigma\in{S_n}$.

\end{lemma}
\begin{proof}
We assume that $|f|\leq{q^{n-j}}$.
Let us prove this lemma by induction on $n$ and $j$. For $j=0$, we have $f$ is a constant. So $|f|={q^{n}}$ and the claim of the lemma holds. If $n=1$ and $j>0$, then $j=1$. In this case the claim of the theorem also holds. So, in this lemma we can assume that $n\geq 2$. Let us prove the induction step. As we noted above there exist numbers $k$ and $m$ such that $f_k\neq f_m$ and $f_m\not\equiv0$. Without loss of generality, we can assume that $k=q-2$ and $m=q-1$.

Lemma~\ref{reduction 2}(1) implies that $f_{q-2}-f_{q-1}\in{U_{[0,j-1]}(n-1,q)}$.
By the induction assumption, $|f_{q-2} - f_{q-1}| \ge q^{n-j}$. So,
$$ |f| \ge |f_{q-2}| + |f_{q-1}| \ge |f_{q-2} - f_{q-1}| \ge q^{n-j}.$$
On the other hand, we supposed that $|f| \le q^{n-j}$.
So, we have $f_k \equiv 0$ for every $k<q-2$. In particular,
$f_0 \equiv 0$ because $q \ge 3$.
In particular, $f_0 \ne f_{q-1}$, and considering $k=0$ and $m=q-1$
similarly as above,
we find that $f_{q-2} \equiv 0$ too.
Thus,
$$ f(x_1,\ldots,x_n)=f_{q-1}(x_1,\ldots,x_{n-1})\cdot a_4(q-1)(x_n), $$
and the statement of the lemma follows from the induction assumption applied to $f_{q-1}$.

\end{proof}

Further we will prove the theorem for $i\geq 1$. We note that if $n\leq 2$, $i+j\le n$ and $i\geq 1$, then $n=2$ and $i=j=1$. In this case the statement of Theorem~\ref{Theorem 1.1} was proved in \cite{Valyuzhenich} (Theorem 3). In what follows, in the proof of the theorem we assume that $n\geq 3$.

\begin{lemma}\label{Lemma o ravnomernosti 1}
Let $f$ be a non uniform function from $U_{[i,j]}(n,q)$, where $i+j\le n$, $i\geq 1$ and $q\geq 3$. Then $|f|>2^{i}(q-1)^{i}q^{n-i-j}$.
\end{lemma}
\begin{proof}

\textbf{Case $q>3$.} Since $f$ is not a uniform function, there exist a number $r$ and distinct numbers $k$, $m$, $s$ and $t$ such that $f_{k}^{r}\neq{f_{m}^{r}}$ and $f_{s}^{r}\neq{f_{t}^{r}}$. Denote $\tilde{f_k}=f_{k}^{r}$ for $k\in\Sigma_q$. Lemma~\ref{reduction 2}(1) implies that $\tilde{f_{k}}-\tilde{f_{m}}\in{U_{[i-1,j-1]}(n-1,q)}$ and  $\tilde{f_{s}}-\tilde{f_{t}}\in{U_{[i-1,j-1]}(n-1,q)}$. By the induction assumption we have $$|\tilde{f_{k}}-\tilde{f_{m}}|\geq 2^{i-1}(q-1)^{i-1}q^{n-i-j+1}$$ and $$|\tilde{f_{s}}-\tilde{f_{t}}|\geq 2^{i-1}(q-1)^{i-1}q^{n-i-j+1}.$$ Then $$|f|=\sum_{p=0}^{q-1}|\tilde{f_p}|\geq |\tilde{f_k}|+|\tilde{f_m}|+|\tilde{f_s}|+|\tilde{f_t}|\geq |\tilde{f_{k}}-\tilde{f_{m}}|+ |\tilde{f_{s}}-\tilde{f_{t}}| \geq{2^{i}(q-1)^{i-1}q^{n-i-j+1}}>2^{i}(q-1)^{i}q^{n-i-j}.$$

\textbf{Case $q=3$.} Since $f$ is not a uniform function, there exists a number $r$ such that $f_{0}^{r}\neq f_{1}^{r}$, $f_{1}^{r}\neq f_{2}^{r}$ and $f_{0}^{r}\neq f_{2}^{r}$. Denote $\tilde{f_k}=f_{k}^{r}$ for $k\in\Sigma_q$. Lemma~\ref{reduction 2}(1) implies that $\tilde{f_{k}}-\tilde{f_{m}}\in{U_{[i-1,j-1]}(n-1,q)}$ for $k\neq m$. Then by the induction assumption we obtain that $$|\tilde{f_{k}}-\tilde{f_{m}}|\geq 2^{2i-2}\cdot3^{n-i-j+1}$$  for $k,m\in{\Sigma_3}$ and $k\neq m$. We note that $$|\tilde{f_0}|+|\tilde{f_1}|+|\tilde{f_2}|=\frac{1}{2}(|\tilde{f_0}|+|\tilde{f_1}|)+\frac{1}{2}(|\tilde{f_1}|+|\tilde{f_2}|)+\frac{1}{2}(|\tilde{f_0}|+|\tilde{f_2}|).$$
Using inequalities $|\tilde{f_{k}}|+|\tilde{f_{m}}|\geq |\tilde{f_{k}}-\tilde{f_{m}}|$ for $k,m\in{\Sigma_3}$ and $k\neq m$, we obtain that $$|\tilde{f_0}|+|\tilde{f_1}|+|\tilde{f_2}|\geq{2^{2i-3}\cdot3^{n-i-j+2}}.$$ Then $$|f|\geq{2^{2i-3}\cdot3^{n-i-j+2}}>2^{2i}\cdot3^{n-i-j}.$$

\end{proof}
\begin{lemma}\label{Lemma o tom cto mozno induction delat 1}
Let $f\in{U_{[i,j]}(n,q)}$, $r\in\{1,2,\ldots,n\}$, $f_{0}^r=f_{1}^r=\ldots=f_{q-2}^r$, $f_{0}^r\neq f_{q-1}^r$, $i+j\le n$, $i\geq 1$  and $q\geq 3$. Let $|f_{0}^r|>2^{i-1}(q-1)^{i-1}q^{n-i-j}$. Then $|f|>2^{i}(q-1)^{i}q^{n-i-j}$.
\end{lemma}
\begin{proof}
 Lemma~\ref{reduction 2}(1) implies that $f_{q-2}^r-f_{q-1}^r\in{U_{[i-1,j-1]}(n-1,q)}$. Hence by the induction assumption we obtain that $$|f_{q-2}^r-f_{q-1}^r|\geq 2^{i-1}(q-1)^{i-1}q^{n-i-j+1}.$$ By Lemma~\ref{nizhnya ocenka for nositelya} we have $$|f|\geq{(q-2)|f_{0}^r|+|f_{q-2}^r-f_{q-1}^r|}.$$ Hence $$|f|>{(q-2)2^{i-1}(q-1)^{i-1}q^{n-i-j}+2^{i-1}(q-1)^{i-1}q^{n-i-j+1}}=2^{i}(q-1)^{i}q^{n-i-j}.$$
\end{proof}
We continue the proof of Theorem \ref{Theorem 1.1}.
Recall that we assume $|f|\leq 2^{i}(q-1)^{i}q^{n-i-j}$. Then using Lemma~\ref{Lemma o ravnomernosti 1}, we obtain that $f$ is a uniform function. Hence, without loss of generality, we can assume that $f_0=f_1=\ldots=f_{q-2}$. Lemma~\ref{Lemma o tom cto mozno induction delat 1} implies that
$|f_0|\leq 2^{i-1}(q-1)^{i-1}q^{n-i-j}$. We have $f_{0}\in{U_{[i-1,j]}(n-1,q)}$ due to Lemma~\ref{reduction 2}(3). Then by the induction assumption there are two cases: $f_0\equiv0$ or $|f_0|=2^{i-1}(q-1)^{i-1}q^{n-i-j}$.

Consider the case $f_0\equiv0$. Lemma~\ref{reduction 3} implies that $f_{q-1}\in{U_{[i,j-1]}(n-1,q)}$. If $i=j$, then $f_{q-1}\in{U_{[i,i-1]}(n-1,q)}$
and we have that $f_{q-1}\equiv0$. Hence $f\equiv0$ for $i=j$. So, we can assume that $i<j$. By the induction assumption we obtain $|f_{q-1}|\geq2^{i}(q-1)^{i}q^{n-i-j}$. Then  $$|f|=\sum_{k=0}^{q-1}|f_k|=|f_{q-1}|\geq2^{i}(q-1)^{i}q^{n-i-j}.$$ Moreover, if $|f|=2^{i}(q-1)^{i}q^{n-i-j}$, then $|f_{q-1}|=2^{i}(q-1)^{i}q^{n-i-j}$. Then by the induction assumption for $f_{q-1}$ we obtain that  $$(f_{q-1})_{\pi}\in{F_1(n-1,q,i,j-1)}$$ for some permutation $\pi\in{S_{n-1}}$. Since $f_0=f_1=\ldots=f_{q-2}\equiv{0}$, we have $f=f_{q-1}\cdot a_{4}(q-1)$ and $a_{4}(q-1)\in{A_4}$. So, we prove the theorem in this case.

Consider the case $|f_{0}|=2^{i-1}(q-1)^{i-1}q^{n-i-j}$.
Lemma \ref{reduction 2}(3) implies that $f_{0}\in{U_{[i-1,j]}(n-1,q)}$. Then by the induction assumption for $f_{0}$ we obtain that
 $$(f_{0})_{\pi}\in{F_1(n-1,q,i-1,j)}$$ for some permutation $\pi\in{S_{n-1}}$. Without loss of generality one can take $\pi$ equal the identity permutation, so $f_{0}\in{F_1(n-1,q,i-1,j)}$. Hence $f_0=g\cdot a_4(m)$ for some $m\in{\Sigma_{q}}$. Without loss of generality, we assume that $m=q-1$.
Therefore, we have $f_k=g\cdot a_4(q-1)$ for any $k<q-1$. Then $f|_{x_{n-1}=a,x_{n}=b}\equiv0$ for $a\in{[0,q-2]}$ and $b\in{[0,q-2]}$ and $f|_{x_{n-1}=q-1,x_{n}=c}=g$ for any $c<q-1$. We also note that $$|g|=|f_0|=2^{i-1}(q-1)^{i-1}q^{n-i-j}.$$

Let us consider the functions $f_{0}^{n-1},f_{1}^{n-1},\ldots,f_{q-1}^{n-1}$.
Since $f_{k}^{n-1}|_{x_n=0}\equiv0$ for any $k<q-1$ and $f_{q-1}^{n-1}|_{x_n=0}=g$, we see that $f_{k}^{n-1}\neq f_{q-1}^{n-1}$ for any $k<q-1$.
On the other hand, $f$ is uniform. Hence $f_{0}^{n-1}=\ldots=f_{q-2}^{n-1}$.
If $f_{0}^{n-1}\equiv0$, then we have the case that we considered above (we can consider $f_{0}^{n-1}$ instead of $f_0$). So $f_{0}^{n-1}\not\equiv0$.
By Lemma \ref{reduction 2}(3) we have $f_{0}^{n-1}\in{U_{[i-1,j]}(n-1,q)}$. Then by the induction assumption we obtain
$$|f_{0}^{n-1}|\geq 2^{i-1}(q-1)^{i-1}q^{n-i-j}.$$
Denote $h=f_{0}^{n-1}|_{x_{n}=q-1}$. Then $$|h|=|f_{0}^{n-1}|\geq2^{i-1}(q-1)^{i-1}q^{n-i-j}.$$
We also note that $f_{k}^{n-1}|_{x_{n}=q-1}=h$ for any $k<q-1$. Denote $\varphi=f|_{x_{n-1}=q-1,x_{n}=q-1}$.

Recall that $|f|\leq 2^{i}(q-1)^{i}q^{n-i-j}$. On the other hand, we have $$|f|=(q-1)(|g|+|h|)+|\varphi|,$$
$|g|=2^{i-1}(q-1)^{i-1}q^{n-i-j}$ and $|h|\geq2^{i-1}(q-1)^{i-1}q^{n-i-j}$.
So $|h|=2^{i-1}(q-1)^{i-1}q^{n-i-j}$ and $\varphi\equiv0$.

Let us prove that $g+h\in{U_{[i-1,j-2]}(n-2,q)}$.
Since $f_{0}\in{U_{[i-1,j]}(n-1,q)}$, by Lemma \ref{reduction 3} we have $g\in{U_{[i-1,j-1]}(n-2,q)}$. Similarly we obtain that $h\in{U_{[i-1,j-1]}(n-2,q)}$. Consequently, we have $g+h\in{U_{[i-1,j-1]}(n-2,q)}$. On the other hand, Lemma \ref{reduction 2}(1) implies that $f_{q-2}-f_{q-1}\in{U_{[i-1,j-1]}(n-1,q)}$. Applying Lemma~\ref{reduction 2}(1) for $f_{q-1}-f_{q-2}$, we obtain that $g+h\in{U_{[i-2,j-2]}(n-2,q)}$. Therefore $g+h\in{U_{[i-1,j-2]}(n-2,q)}$.

Let us prove that $g+h\equiv0$. Suppose $g+h\not\equiv0$. Then by the induction assumption we have
$$|g+h|\geq2^{i-1}(q-1)^{i-1}q^{n-i-j+1}.$$
On the other hand, $|g|=|h|=2^{i-1}(q-1)^{i-1}q^{n-i-j}$.
So, we have $$2^{i}(q-1)^{i-1}q^{n-i-j}=|g|+|h|\geq|g+h|\geq2^{i-1}(q-1)^{i-1}q^{n-i-j+1}.$$
Therefore $q\le 2$ and we have a contradiction. Thus $g+h\equiv0$. Then $f=g\cdot a_{1}(q-1,q-1)$. Applying the induction assumption for $g$, we finish the proof of the theorem.

\end{proof}

\begin{corollary}\label{Corollary 1.1}
Let $f\in{U_{i}(n,q)}$, $i\leq \lfloor\frac{n}{2}\rfloor$, $q\geq 3$ and $f\not\equiv 0$. Then $|f|\geq 2^{i}(q-1)^{i}q^{n-2i}$. Moreover, the equality $|f|=2^{i}(q-1)^{i}q^{n-2i}$ holds if and only if $f_{\sigma}\in F_{1}(n,q,i,i)$ for some permutation $\sigma\in{S_n}$.
\end{corollary}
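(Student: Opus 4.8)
The plan is simply to specialize Theorem~\ref{Theorem 1.1} to the case $i=j$. Indeed, the statement $f\in U_i(n,q)$ is exactly $f\in U_{[i,i]}(n,q)$ in the notation of the paper, and the hypothesis $i\le\lfloor n/2\rfloor$ is equivalent (for integers) to $n\ge 2i=i+j$. So Theorem~\ref{Theorem 1.1} applies verbatim with $j=i$, giving $|f|\ge 2^i(q-1)^i q^{n-2i}$ together with the desired characterization of equality in terms of membership in $F_1(n,q,i,i)$ up to a coordinate permutation.

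Concretely, I would write: \emph{By hypothesis $i\le\lfloor n/2\rfloor$, hence $n\ge 2i$, i.e.\ $n\ge i+j$ for $j=i$. Since $U_i(n,q)=U_{[i,i]}(n,q)$, the function $f$ lies in $U_{[i,i]}(n,q)$, and Theorem~\ref{Theorem 1.1} applied with $j=i$ yields $|f|\ge 2^i(q-1)^i q^{n-2i}$, with equality if and only if $f_\sigma\in F_1(n,q,i,i)$ for some $\sigma\in S_n$.} That is the whole proof.

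There is essentially no obstacle here; the only thing to be slightly careful about is the translation between $i\le\lfloor n/2\rfloor$ and $n\ge i+j$. Since $i$ and $n$ are integers, $i\le\lfloor n/2\rfloor$ holds precisely when $2i\le n$, which is $n\ge i+j$ with $j=i$; no rounding issues arise. I would also note in passing that when $j=i$ the factor $\prod_{k=1}^{j-i}v_k$ in the definition of $F_1(n,q,i,j)$ is an empty product, so functions in $F_1(n,q,i,i)$ are, up to a constant and a permutation of coordinates, products of $i$ functions from $A_1$ (each on a pair of coordinates) and $n-2i$ all-ones functions — matching the claimed support size $2^i(q-1)^i q^{n-2i}$, already recorded in Lemma~\ref{constructions of eigenfunctions}.

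Thus the corollary is an immediate instance of the main theorem, and the proof is a one-line reduction; I would present it as such rather than reproving anything.
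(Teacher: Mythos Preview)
Your proof is correct and is exactly the approach the paper takes: the corollary is stated immediately after Theorem~\ref{Theorem 1.1} with no separate proof, since it is just the special case $j=i$ (noting $i\le\lfloor n/2\rfloor\Leftrightarrow n\ge 2i$ and $U_i(n,q)=U_{[i,i]}(n,q)$).
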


\section{Case $i+j>n$}

In this section we prove the second main result of this work. We find the minimum size of the support of functions from $U_{[i,j]}(n,q)$ for $i+j>n$.
Firstly, we solve the problem for the uniform functions:

\begin{theorem}\label{Theorem for uniform}
Let $f$ be a uniform function from $U_{[i,j]}(n,q)$, where $i+j\geq n$, $q\geq 3$ and $f\not\equiv 0$. Then $|f|\geq{2^{n-j}(q-1)^{n-j}q^{i+j-n}}$.
\end{theorem}
\begin{proof}
We note that the statement of the theorem was proved in Theorem~\ref{Theorem 1.1} for $n=i+j$. So, we can assume that $i+j>n$.
Let us prove this theorem by induction on $n$, $i$ and $j$.
Consider the functions $f_{k}^{n}$ for $k\in\Sigma_q$. For the function $f_{k}^{n}$ we will use the more convenient notation $f_k$.
Since $f$ is a uniform function, we can assume that $f_0=f_1=\ldots=f_{q-2}$. We note that $f_k$ is uniform for any $k\in\Sigma_q$.

Firstly, we prove the theorem for $j=n$.
\begin{lemma}\label{Lemma Case n=j}
Let $f$ be a uniform function from $U_{[i,n]}(n,q)$, where $q\geq 3$ and $f\not\equiv 0$. Then $|f|\geq{q^{i}}$.
\end{lemma}
\begin{proof}
Let us prove this lemma by induction on $n$ and $i$.
For $i=0$ and arbitrary $n$, we see that $|f|\geq{1}$.

Let us prove the induction step.
 Suppose $f_0\equiv 0$. Lemma~\ref{reduction 3} implies that $f_{q-1}\in{U_{[i,n-1]}(n-1,q)}$. Then by the induction assumption we obtain $|f_{q-1}|\geq q^{i}$. Hence $|f|=|f_{q-1}|\geq q^{i}$.

Consider the case $f_{q-1}\equiv 0$. Using Lemma~\ref{reduction 2}(2), we have $f_{0}\in{U_{[i,n-1]}(n-1,q)}$. Then by the induction assumption we obtain $|f_{0}|\geq q^{i}$. Therefore $|f|>{q^{i}}$.

Suppose $f_0\not\equiv 0$ and $f_{q-1}\not\equiv 0$. We have $f_{k}\in{U_{[i-1,n-1]}(n-1,q)}$ for $k\in{\Sigma_q}$ due to Lemma~\ref{reduction 2}(3). By the induction assumption we have $|f_k|\geq q^{i-1}$ for $k\in{\Sigma_q}$. Hence $|f|\geq q^{i}$.

\end{proof}

Now we prove the theorem for $j<n$.
We consider two cases.

\textbf{Case $f_0\equiv 0$.} Since $f\not\equiv 0$, we have $f_{q-1}\not\equiv 0$.  Lemma~\ref{reduction 3} implies that $f_{q-1}\in{U_{[i,j-1]}(n-1,q)}$. Then by the induction assumption we obtain $|f_{q-1}|\geq 2^{n-j}(q-1)^{n-j}q^{i+j-n}$. Hence $|f|=|f_{q-1}|\geq 2^{n-j}(q-1)^{n-j}q^{i+j-n}$.

\textbf{Case $f_0\not\equiv 0$.}
Lemma~\ref{reduction 2}(2) implies that $$\sum_{p=0}^{q-1}f_{p}=(q-1)\cdot f_0+f_{q-1}\in{U_{[i,j]}(n-1,q)}.$$ If $(q-1)f_0+f_{q-1}\equiv0$, then $f_0-f_{q-1}=q\cdot f_0$. Then $f_{k}\in{U_{[i-1,j-1]}(n-1,q)}$ for $k\in{\Sigma_q}$ due to Lemma~\ref{reduction 2}(1). Recall that in the beginning of the proof we assumed $i+j>n$. By the induction assumption we have $$|f_k|\geq 2^{n-j}(q-1)^{n-j}q^{i+j-n-1}.$$ Hence $|f|\geq 2^{n-j}(q-1)^{n-j}q^{i+j-n}$.

Suppose $(q-1)f_0+f_{q-1}\not\equiv0$. We note that $(q-1)f_0+f_{q-1}$ is uniform. By the induction assumption we have $$|(q-1)f_0+f_{q-1}|\geq 2^{n-j-1}(q-1)^{n-j-1}q^{i+j-n+1}.$$
Since $|f_{0}|+|f_{q-1}|\geq |(q-1)\cdot f_{0}+f_{q-1}|$, we obtain $$|f_{0}|+|f_{q-1}|\geq 2^{n-j-1}(q-1)^{n-j-1}q^{i+j-n+1}.$$
By Lemma~\ref{reduction 2}(3), we have $f_{0}\in{U_{[i-1,j]}(n-1,q)}$, and the induction assumption implies that $$|f_{0}|\geq 2^{n-j-1}(q-1)^{n-j-1}q^{i+j-n}.$$
Using the equality $$|f|=\sum_{p=0}^{q-1}|f_{p}|=(q-2)\cdot |f_{0}|+|f_0|+|f_{q-1}|,$$ we obtain $|f|\geq 2^{n-j}(q-1)^{n-j}q^{i+j-n}$.

\end{proof}

Now we prove the main theorem of this section.
\begin{theorem}\label{Theorem 2.1}
Let $f\in{U_{[i,j]}(n,q)}$, $i+j>n$, $q\geq 4$ and $f\not\equiv 0$. Then $|f|\geq 2^{i}(q-1)^{n-j}$. Moreover, for $i=j$ and $q\geq 5$ the equality $|f|=2^{i}(q-1)^{n-i}$ holds if and only if $f_{\sigma}\in F_{2}(n,q,i,i)$ for some permutation $\sigma\in{S_{n}}$.
\end{theorem}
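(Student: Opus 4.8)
The plan is to mirror the structure of the proof of Theorem~\ref{Theorem 1.1}, replacing the target bound $2^i(q-1)^i q^{n-i-j}$ by $2^i(q-1)^{n-j}$ and using Theorem~\ref{Theorem for uniform} to dispose of the uniform case. As in Theorem~\ref{Theorem 1.1}, I assume $|f|\leq 2^i(q-1)^{n-j}$ and induct on $n$, $i$, $j$. The base cases are: $i=0$, where the bound reads $(q-1)^{n-j}$ and should follow from an argument analogous to Lemma~\ref{Lemma 1.1} (splitting off coordinates via Lemma~\ref{reduction 2}, each split contributing a factor $q-1$ from the $a_2$-type pieces rather than $q$); and small $n$, where $i+j>n$ forces few possibilities. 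Note that $F_2(n,q,i,i)$ has $i+j-n=2i-n$ factors from $A_2$ (length-$q$, support $q-1$), $n-j=n-i$ factors from $A_1$ (support $2(q-1)$), and $j-i=0$ factors from $A_4$; this is why the extremal value is $2^i(q-1)^{n-i}$, consistent with Lemma~\ref{constructions of eigenfunctions}(2).

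For the inductive step, first I would establish the analogue of Lemma~\ref{Lemma o ravnomernosti 1}: if $f$ is not uniform then $|f|>2^i(q-1)^{n-j}$. For $q\geq 5$ one finds a coordinate $r$ and four distinct values $k,m,s,t$ with $f_k^r\neq f_m^r$, $f_s^r\neq f_t^r$; the differences lie in $U_{[i-1,j-1]}(n-1,q)$, so the induction bound gives $|f|\geq |\tilde f_k|+|\tilde f_m|+|\tilde f_s|+|\tilde f_t|\geq 2\cdot 2^{i-1}(q-1)^{n-j}=2^i(q-1)^{n-j}$, and one needs the inequality to be strict — which it is unless all four restrictions achieve equality and the remaining $q-4$ restrictions vanish, a configuration one rules out using Theorem~\ref{Theorem for uniform} applied to $f$ viewed as uniform after the reduction, or by a parity/counting contradiction. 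The $q=4$ case of this sub-lemma is the most delicate: here "not uniform" only guarantees, in the worst case, three pairwise-distinct restrictions among four, and the averaging trick of the $q=3$ case of Lemma~\ref{Lemma o ravnomernosti 1} gives $|f|\geq \tfrac{3}{2}\cdot 2^{i-1}(q-1)^{n-j}$, which is $>2^i(q-1)^{n-j}$; so $q=4$ is fine for the \emph{bound} but the characterization is only claimed for $q\geq 5$, which is why we may assume $q\geq 5$ throughout the equality analysis.

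Once $f$ is uniform, say $f_0=f_1=\dots=f_{q-2}$, the analogue of Lemma~\ref{Lemma o tom cto mozno induction delat 1} (using Lemma~\ref{nizhnya ocenka for nositelya} and that $f_{q-2}-f_{q-1}\in U_{[i-1,j-1]}(n-1,q)$) forces $|f_0|\leq 2^{i-1}(q-1)^{n-j}$ when $f_0\neq f_{q-1}$. Since $f_0\in U_{[i-1,j]}(n-1,q)$ by Lemma~\ref{reduction 2}(3), and $(i-1)+j>n-1$, the induction hypothesis gives either $f_0\equiv 0$ or $|f_0|=2^{i-1}(q-1)^{n-j}$. If $f_0\equiv 0$: when $i=j$, Lemma~\ref{reduction 3} puts $f_{q-1}\in U_{[i,i-1]}(n-1,q)=\{0\}$, forcing $f\equiv 0$, contradiction — so in the equality case with $i=j$ this branch cannot occur, a pleasant simplification over Theorem~\ref{Theorem 1.1}. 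If $|f_0|=2^{i-1}(q-1)^{n-j}$: by induction $(f_0)_\pi\in F_2(n-1,q,i-1,j)$ (note $(i-1)+j>n-1$), and $i-1<j=i$ is false, so actually $j-i=0$ means $f_0$ has no $A_4$-factor; hence $f_0=f'\cdot g$ where $g\in A_2$ or $g\in A_1$ depending on whether a fresh $A_2$ or $A_1$ coordinate is peeled. The remaining work — reconstructing $f$ from $f_0$ and $f_{q-1}$, showing $f_{q-1}-f_0$ forces the last coordinate to pair up into an $A_1$ or $A_2$ factor, and concluding $f_\sigma\in F_2(n,q,i,i)$ — parallels the final paragraph of the proof of Theorem~\ref{Theorem 1.1}, with the dichotomy "the peeled coordinate of length $q$ contributes via $A_2$ (support $q-1$) versus the block of two coordinates contributes via $A_1$ (support $2(q-1)$)" replacing the $A_1$/$A_4$ dichotomy there. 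The main obstacle I anticipate is precisely this last reconstruction step: bookkeeping which of the $i+j-n$ "$A_2$-type" coordinates versus $n-j$ "$A_1$-type" coordinate-pairs the induction has already consumed, and verifying strictness of the support inequalities at every branch so that the extremal structure is forced; the $q\geq 5$ hypothesis is what makes the four-value argument and the various strict inequalities go through cleanly.
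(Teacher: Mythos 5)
Your plan breaks down at its central sub-lemma. You propose to prove that ``if $f$ is not uniform then $|f|>2^{i}(q-1)^{n-j}$'' and to dismiss the equality configuration (two disjoint pairs attaining the triangle inequality with equality while the remaining $q-4$ restrictions vanish) by Theorem~\ref{Theorem for uniform} or a counting argument. That configuration cannot be ruled out: it is exactly what the extremal functions look like. Since $i+j>n$, every $f\in F_{2}(n,q,i,j)$ contains at least one $a_{2}$-factor, and along that coordinate its restrictions are $g,-g,0,\ldots,0$; such an $f$ is \emph{not} uniform, satisfies $|f_{0}|+|f_{1}|=|f_{0}-f_{1}|$, has all other restrictions identically zero, and meets the bound $2^{i}(q-1)^{n-j}$ with equality. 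So your strengthened lemma is false, and this is precisely why the paper's Lemma~\ref{Lemma o ravnomernosti 2} carries the additional hypothesis $f_{k}^{v}\not\equiv 0$ for all $k,v$, which becomes available only after the case ``some restriction $f_{k}^{r}\equiv 0$'' has been treated separately. That separate case is also where the characterization comes from in the paper: with at most two nonzero restrictions $f_{0},f_{1}$, Case 2 of Lemma~\ref{reduction 2} gives $f_{0}+f_{1}\in U_{i}(n-1,q)$ while $f_{0},f_{1}\in U_{i-1}(n-1,q)$, forcing $f_{0}+f_{1}\equiv 0$, hence $f=f_{0}\cdot a_{2}(0,1)$, and one inducts on $f_{0}$. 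Your proposal instead hunts for the equality case among \emph{uniform} functions, mirroring Theorem~\ref{Theorem 1.1}; but by Theorem~\ref{Theorem for uniform} a uniform function has $|f|\geq 2^{n-j}(q-1)^{n-j}q^{i+j-n}>2^{i}(q-1)^{n-j}$ when $i+j>n$, so no uniform function is extremal and your entire reconstruction paragraph addresses an empty case. The key structural idea you are missing is that for $i+j>n$ the extremal functions are non-uniform and are detected through an identically vanishing restriction, not through the uniform branch.

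Two further points. First, your $q=4$ treatment of the non-uniform bound is wrong as written: the averaging trick yields $|f|\geq\tfrac{3}{2}\cdot 2^{i-1}(q-1)^{n-j}=\tfrac{3}{4}\cdot 2^{i}(q-1)^{n-j}$, which is \emph{less} than the target, not greater; in fact for $q\geq 4$ non-uniformity always supplies two disjoint unequal pairs among four distinct indices (check the possible equality classes), so the four-index argument applies directly, as in the paper. Second, the base case $i=0$ you set up is vacuous, since $i+j>n$ and $j\leq n$ force $i\geq 1$. The inequality part of your plan (non-uniform via the four-index argument, uniform via Theorem~\ref{Theorem for uniform}) is essentially the paper's and is salvageable, but the equality characterization as proposed would fail.
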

\begin{proof}

Lemma~\ref{constructions of eigenfunctions} implies that if $f\in F_{2}(n,q,i,j)$, then $f\in{U_{[i,j]}(n,q)}$ and $|f|=2^{i}(q-1)^{n-j}$.

Let us prove this theorem by induction on $n$, $i$ and $j$. Since $i+j>n$, we have that $i\geq 1$.
Suppose that there exist numbers $k$ and $r$ such that $f_{k}^{r}\equiv 0$. Without loss of generality, we assume that $k=q-1$ and $r=n$. For the function $f_{k}^{n}$ we will use the more convenient notation $f_k$.
Lemma~\ref{reduction 2}(1) implies that $f_{m}-f_{q-1}\in{U_{[i-1,j-1]}(n-1,q)}$ for $m<q-1$. Therefore $f_{m}\in{U_{[i-1,j-1]}(n-1,q)}$ for $m<q-1$. So, if $f_{m}\not\equiv0$, then using the induction assumption for $i+j>n+1$ and Theorem~\ref{Theorem 1.1} in the case $i+j=n+1$, we have $|f_m|\geq 2^{i-1}(q-1)^{n-j}$. Since $|f|=\sum_{p=0}^{q-1}|f_p|$, the number of $k$ such that $f_{k}\not\equiv0$ is at most two. There are two variants.

In the first case there exists only one $k$ such that $f_{k}\not\equiv0$. Without loss of generality, we assume that $k=0$. We have $f_{0}\in{U_{[i,j-1]}(n-1,q)}$ due to Lemma~\ref{reduction 3}. If $i=j$, then $f_{0}\in{U_{[i,i-1]}(n-1,q)}$ and $f\equiv0$. For $i<j$ by the induction assumption we obtain $|f_0|\geq 2^{i}(q-1)^{n-j}$. So $|f|\geq 2^{i}(q-1)^{n-j}$.

In the second case there exist two numbers $k$ and $m$ such that $f_{k}\not\equiv0$ and $f_{m}\not\equiv0$. Without loss of generality, we assume that $k=0$ and $m=1$. As we noted above $|f_0|\geq 2^{i-1}(q-1)^{n-j}$ and $|f_1|\geq 2^{i-1}(q-1)^{n-j}$. So
$|f|=|f_0|+|f_1|\geq 2^{i}(q-1)^{n-j}$. Suppose that $i=j$, $q\geq 5$ and the equality $|f|=2^{i}(q-1)^{n-i}$ holds. By Lemma~\ref{reduction 2}(2) we obtain that $f_{0}+f_{1}\in{U_{i}(n-1,q)}$. Since $f_{0}\in{U_{i-1}(n-1,q)}$ and $f_{1}\in{U_{i-1}(n-1,q)}$, we see that $f_{0}+f_{1}\in{U_{i-1}(n-1,q)}$. Consequently $f_{0}+f_{1}\equiv0$. Hence $f=f_0\cdot a_2(0,1)$. Since $|f|=2^{i}(q-1)^{n-i}$, we have $|f_0|=2^{i-1}(q-1)^{n-i}$. Applying the induction assumption for $f_0$ we prove this theorem.

Thus, in what follows in the proof of this theorem we can assume that $f_{k}^{v}\not\equiv0$ for any $k\in{\Sigma_q}$ and $v\in\{1,2,\ldots,n\}$.

We need the following lemma.
\begin{lemma}\label{Lemma o ravnomernosti 2}
Let $f$ be a non uniform function from $U_{[i,j]}(n,q)$, where $i+j>n$, $f_{k}^{v}\not\equiv0$ for $k\in{\Sigma_q}$ and $v\in\{1,2,\ldots,n\}$, $i\geq 1$ and $q\geq 4$. Then $|f|>2^{i}(q-1)^{n-j}$ for $q>4$ and $|f|\geq2^{i}(q-1)^{n-j}$ for $q=4$.
\end{lemma}
\begin{proof}

Since $f$ is not a uniform function, there exist number $r$ and distinct numbers $k$, $m$, $s$ and $t$ such that $f_{k}^r\neq{f_{m}^{r}}$ and $f_{s}^{r}\neq{f_{t}^{r}}$. Denote $\tilde{f_k}=f_{k}^{r}$ for $k\in\Sigma_q$. Lemma~\ref{reduction 2}(1) implies that $\tilde{f_{k}}-\tilde{f_{m}}\in{U_{[i-1,j-1]}(n-1,q)}$ and  $\tilde{f_{s}}-\tilde{f_{t}}\in{U_{[i-1,j-1]}(n-1,q)}$. Then using the induction assumption for $i+j>n+1$ and Theorem~\ref{Theorem 1.1} in the case $i+j=n+1$, we obtain that $$|\tilde{f_{k}}-\tilde{f_{m}}|\geq 2^{i-1}(q-1)^{n-j}$$ and $$|\tilde{f_{s}}-\tilde{f_{t}}|\geq 2^{i-1}(q-1)^{n-j}.$$

Therefore, we have $$|f|=\sum_{p=0}^{q-1}|\tilde{f_p}|\geq{|\tilde{f_k}|+|\tilde{f_m}|+|\tilde{f_s}|+|\tilde{f_t}|}\geq |\tilde{f_{k}}-\tilde{f_{m}}|+|\tilde{f_{s}}-\tilde{f_{t}}|\geq 2^{i}(q-1)^{n-j}.$$

If $q>4$, then there exists $d$ such that $d\not\in\{k,m,s,t\}$ and $\tilde{f_{d}}\not\equiv0$. So $|f|>2^{i}(q-1)^{n-j}$ for $q>4$.
\end{proof}

Now we finish the proof of this theorem. Suppose that $f$ is not a uniform function. Since $f_{k}^{v}\not\equiv0$ for any $k$ and $v$, by  Lemma~\ref{Lemma o ravnomernosti 2} we obtain that $|f|>2^{i}(q-1)^{n-j}$ for $q>4$ and $|f|\geq2^{i}(q-1)^{n-j}$ for $q=4$. So, we can assume that $f$ is a uniform function. Then $|f|\geq{2^{n-j}(q-1)^{n-j}q^{i+j-n}}$ due to Theorem~\ref{Theorem for uniform}. Since $q>3$, we obtain $|f|>2^{i}(q-1)^{n-j}$.

Thus, if $f_{k}^{v}\not\equiv0$ for any $k\in{\Sigma_q}$ and $v\in\{1,2,\ldots,n\}$ and $q\geq 5$, then $|f|>2^{i}(q-1)^{n-j}$. In particular, in this case $|f|>2^{i}(q-1)^{n-i}$ for $i=j$ and $q\geq 5$.
\end{proof}

\begin{corollary}\label{Corollary 2.1}
Let  $f\in{U_{i}(n,q)}$, $i>\lfloor\frac{n}{2}\rfloor$, $q\geq 4$ and $f\not\equiv 0$. Then $|f|\geq 2^{i}(q-1)^{n-i}$. Moreover, for $q\geq 5$ the equality $|f|=2^{i}(q-1)^{n-i}$ holds if and only if $f_{\sigma}\in F_{2}(n,q,i,i)$ for some permutation $\sigma\in{S_{n}}$.
\end{corollary}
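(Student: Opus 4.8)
The plan is to deduce Corollary~\ref{Corollary 2.1} directly from Theorem~\ref{Theorem 2.1} by specializing to $j=i$; there is essentially no new content, only a verification that the hypotheses of Theorem~\ref{Theorem 2.1} are satisfied. First I would observe that by definition $U_{i}(n,q)=U_{[i,i]}(n,q)$, so saying that $f$ is a nonzero $\lambda_{i}(n,q)$-eigenfunction of $H(n,q)$ is the same as saying $f\in U_{[i,i]}(n,q)$ and $f\not\equiv 0$.

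Next I would check the constraint $i+j>n$ required by Theorem~\ref{Theorem 2.1}, here with $j=i$, i.e.\ that $i>\tfrac{n}{2}$. From $i>\lfloor\tfrac{n}{2}\rfloor$ we get $i\geq\lfloor\tfrac{n}{2}\rfloor+1$. If $n$ is even this reads $i\geq\tfrac{n}{2}+1$, so $2i\geq n+2>n$; if $n$ is odd it reads $i\geq\tfrac{n+1}{2}$, so $2i\geq n+1>n$. In both cases $i+i>n$, so Theorem~\ref{Theorem 2.1} applies with $j=i$.

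Then Theorem~\ref{Theorem 2.1} immediately gives $|f|\geq 2^{i}(q-1)^{n-j}=2^{i}(q-1)^{n-i}$ for $q\geq 4$ and $f\not\equiv 0$, and, in the case $i=j$ with $q\geq 5$, that $|f|=2^{i}(q-1)^{n-i}$ holds if and only if $f_{\sigma}\in F_{2}(n,q,i,i)$ for some permutation $\sigma\in S_{n}$. This is precisely the assertion of the corollary, so the proof is complete. The only point requiring any care is the floor-function bookkeeping showing that $i>\lfloor\tfrac{n}{2}\rfloor$ forces the strict inequality $2i>n$ for both parities of $n$; all the substantive work has already been done in Theorem~\ref{Theorem 2.1} (and, for the matching upper bound, in Lemma~\ref{constructions of eigenfunctions}).
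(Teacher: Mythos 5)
Your proposal is correct and is exactly how the paper obtains this statement: the corollary is the specialization $j=i$ of Theorem~\ref{Theorem 2.1}, with $i>\lfloor\frac{n}{2}\rfloor$ giving $2i>n$ for either parity of $n$. Nothing further is needed.
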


\section{Discussion}

The initial problem of finding functions from  ${U_{[i,j]}(n,q)}$ with minimum size of the support is formulated for arbitrary real-valued functions from corresponding subspace. Surprisingly, Theorems \ref{Theorem 1.1} and \ref{Theorem 2.1} imply that such functions take only $3$ distinct values. Moreover, such functions are equal to a tensor product of several {\it elementary} eigenfunctions of the Hamming graphs of dimensions not greater that $2$ after some permutation of coordinate positions. These elementary functions belong to
$A_1\cup A_3\cup A_4$ and $A_1\cup A_2\cup A_4$ for the cases $i+j\le n$ and $i+j>n$ respectively.

One may notice, that bounds for the size of a support and corresponding characterizations obtained in Theorems \ref{Theorem 1.1} and \ref{Theorem 2.1} require some lower bounds for $q$, and in the case $i+j>n$ for $i\neq j$ there is no characterization. Further we provide several examples explaining difficulties of characterisation for the case $i+j>n$ and for small values of $q$.

\textbf{Remark 1.}
In Theorem~\ref{Theorem 2.1} we prove that $|f|\geq 2^{i}(q-1)^{n-j}$ for $f\in{U_{[i,j]}(n,q)}$, $i+j>n$ and $q\geq4$. On the other hand, if $f\in F_{2}(n,q,i,j)$ and $i+j>n$, then $|f|=2^{i}(q-1)^{n-j}$ and $f\in{U_{[i,j]}(n,q)}$ due to Lemma~\ref{constructions of eigenfunctions}. We note that in general case for $f\in{U_{[i,j]}(n,q)}$ and $i+j>n$ the equality $|f|=2^{i}(q-1)^{n-j}$ does not imply that $f_{\sigma}\in F_{2}(n,q,i,j)$ for some permutation $\sigma\in{S_{n}}$. Consider the following example:

\textbf{Example.}
We define the function $g:\Sigma_{q}^2\longrightarrow{\mathbb{R}}$ by the following rule:
$$
g(x,y)=\begin{cases}
1,&\text{if $x=y=0$;}\\
-1,&\text{if $x=y=q-1$;}\\
0,&\text{otherwise.}
\end{cases}
$$
Denote $g'(x,y)=g(y,x)$. We see that $|g|=2$. We note that $g=a_{2}(0,q-1)\cdot a_{4}(0)+a_{4}(q-1)\cdot a_{2}(0,q-1)$. Consequently $g\in{U_{[1,2]}(2,q)}$ due to Corollary~\ref{product of functions}. Thus $g(x,y)$ has the minimum size of the support in ${U_{[1,2]}(2,q)}$ but $g\not\in{F_2(2,q,1,2)}$ and $g'\not\in{F_2(2,q,1,2)}$.
Similar function can be also constructed for arbitrary $n>2$. Therefore, a possible characterization of functions from $U_{[i,j]}(n,q)$ for $i+j>n$ and $i\neq j$ in terms of tensor products of some elementary functions may contain an infinite set of different elementary functions.

\textbf{Remark 2.}
By the Corollary~\ref{Corollary 2.1} for $f\in{U_{i}(n,q)}$, $i>\lfloor\frac{n}{2}\rfloor$ and  $q\geq5$ the equality $|f|=2^{i}(q-1)^{n-i}$ holds if and only if $f_{\sigma}\in F_{2}(n,q,i,i)$ for some permutation $\sigma\in{S_{n}}$.
The following example shows that for $f\in{U_{i}(n,q)}$, $i>\lfloor\frac{n}{2}\rfloor$ and  $q=4$ the equality $|f|=2^{i}(q-1)^{n-i}$ does not imply that $f_{\sigma}\in F_{2}(n,q,i,i)$ for some permutation $\sigma\in{S_{n}}$.

\textbf{Example.}
We define the functions $h_1,h_2:\Sigma_{4}^2\longrightarrow{\mathbb{R}}$ by the following rules:
$$
h_1(x,y)=\begin{cases}
-1,&\text{if $x=y=0$;}\\
1,&\text{if $x=y=2$;}\\
0,&\text{otherwise}
\end{cases}
$$ and

$$
h_2(x,y)=\begin{cases}
1,&\text{if $x=0$ and $y\in\{1,3\}$;}\\
-1,&\text{if $y=2$ and $x\in\{1,3\}$;}\\
0,&\text{otherwise.}
\end{cases}
$$

We define the function $h:\Sigma_{4}^3\longrightarrow{\mathbb{R}}$ by the following rule:
$$
h(x,y,z)=\begin{cases}
h_1(x,y),&\text{if $z=0$ or $z=1$;}\\
h_2(x,y),&\text{if $z=2$;}\\
h_2(y,x),&\text{if $z=3$.}
\end{cases}
$$
We note that $|h|=12$. By the definition of an eigenfunction one can check that $h\in{U_2(3,4)}$.  Thus $h$ has the minimum size of the support in ${U_{2}(3,4)}$ but $h_{\sigma}\not\in{F_2(3,4,2,2)}$ for any permutation $\sigma\in S_3$.

\textbf{Remark 3.}
We note that Theorem~\ref{Theorem 2.1} does not hold for $q=3$. Let us consider the following example for $n=3$ and $i=j=2$.

\textbf{Example.}
We define the function $v_1:\Sigma_{3}^2\longrightarrow{\mathbb{R}}$ by the following rule:
$$
v_1(x,y)=\begin{cases}
1,&\text{if $x=y=0$;}\\
-1,&\text{if $x=1$ and $y=2$;}\\
0,&\text{otherwise.}
\end{cases}
$$

For $a,b\in{\Sigma_{3}}$ denote by $a\oplus b$ the sum of $a$ and $b$ modulo $3$.
We define the function $v:\Sigma_{3}^3\longrightarrow{\mathbb{R}}$ by the following rule:
$$
v(x,y,z)=\begin{cases}
v_1(x,y),&\text{if $z=0$;}\\
v_1(x\oplus1,y\oplus1),&\text{if $z=1$;}\\
v_1(x\oplus2,y\oplus2),&\text{if $z=2$.}
\end{cases}
$$
We note that $|v|=6$. By the definition of an eigenfunction one can check that $v\in{U_2(3,3)}$. So $|v|<8$ and Theorem~\ref{Theorem 2.1} does not hold in this case.

\section{Acknowledgements}
We are grateful to the referees for useful remarks.

\end{document}